\newcommand{\A}{{\mathfrak A}}
\newcommand{\Z}{{\mathbb Z}}
\newcommand{\Q}{{\mathbb Q}}
\theoremstyle{definition}
\theoremstyle{plain}
\newtheorem{Lemma}{\sc Lemma}
\newtheorem{Theorem}{\sc Theorem}
\newtheorem{Proposition}{\sc Proposition}
\newtheorem{Corollary}{\sc Corollary}
\newtheorem{Problem}{\sc Problem}
\theoremstyle{remark}
\newtheorem{Remark}{\sc Remark}
\newtheorem{Example}{\sc Example}
\begin{document}

\subjclass{20K15, 20K20, 11R20,  20E22, 20K35, 20D15.}
\keywords{2-generator groups, solvable groups, wreath products, embeddings of groups, abelian groups, metabelian groups, \\ $\phantom{.}$ \,\,\,  Journal of Group Theory, 16, 5, 695–-705,  ISSN 1433-5883}

\title{On abelian subgroups of finitely generated metabelian groups}

\author[V. H. Mikaelian, \,\,  A. Yu. Olshanskii]{
V. H. Mikaelian
\quad
A. Yu. Olshanskii
}

\dedicatory{To Professor Gilbert Baumslag to his 80'th birthday }

\begin{abstract}
In this note we introduce the class of $\mathcal H$-groups (or Hall groups) related to the class of $\mathcal B$-groups defined by Ph.~Hall in 1950's. Establishing some basic properties of Hall groups we use them to obtain results concerning embeddings of abelian groups.
In particular, we give an explicit classification of all abelian groups that can occur as subgroups in finitely-generated metabelian groups. Hall groups allow to give a  negative answer to the Baumslag's conjecture of 1990 on the cardinality of the set of isomorphism classes for abelian subgroups in finitely generated metabelian groups.
%
\end{abstract}

\date{8 February, 2013}

\maketitle

{\small \parskip0mm \tableofcontents }

\section{Introduction}
\label{Introduction}

\noindent
Our note goes back to the old  paper of P. Hall~\cite{On the finiteness
of certain soluble groups} who obtained
 the properties of abelian {\it normal} subgroups in finitely
 generated metabelian and abelian-by-polycyclic groups. Let
 ${\mathcal B}$ be the class of all abelian groups $B$, where $B$ is
 an abelian normal subgroup of some finitely generated groups $G$ with
 polycyclic quotient $G/B$. It is proved in Lemmas 8 and 5.2 of~\cite{On the finiteness of certain soluble groups},  that $\mathcal B \subset \mathcal H,$
 where the class of countable abelian groups $\mathcal H$ can be defined
 as follows (in the present paper, we will say that the groups from $\mathcal H$
 are {\it Hall groups}). By definition, $H\in \mathcal H$ if

(1) $H$ is a (finite or) countable abelian group,

(2) $H=T\oplus K,$ where $T$ is a bounded torsion group (i.e.,
 the orders of all elements in $T$ are bounded), $K$ is  torsion free, and

(3) $K$ has a free abelian subgroup $F$ such that  $K/F$ is a torsion group with
 trivial $p$-subgroups for all primes except for the members of a finite set  $\pi=\pi(K)$.

Applying Ph.~Hall's results we observe the following.

\medskip

  \begin{Theorem}\label{main} A group $H$ is an abelian subgroup of a finitely generated
   abelian-by-polycyclic group if and only if  $H$ is a Hall group. Moreover, every Hall
   groups is embeddable into the derived subgroup of a 2-generated metabelian group.
   \end{Theorem}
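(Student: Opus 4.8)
The two assertions split into necessity (an abelian subgroup of a finitely generated abelian-by-polycyclic group is a Hall group) and the sharper sufficiency contained in the ``moreover'' clause. Note that the latter already yields the ``if'' direction, since a 2-generated metabelian group $G$ is finitely generated with $G'$ abelian and $G/G'$ a finitely generated abelian, hence polycyclic, group; so $G$ is abelian-by-polycyclic and any $H\sub G'$ is an abelian subgroup of it.

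For necessity I would first record two closure properties of the class $\mathcal H$. The class is closed under passing to subgroups: if $H=T\oplus K\in\mathcal H$ and $H'\sub H$, then the torsion subgroup of $H'$ equals $H'\cap T$, which is bounded and hence a direct summand of $H'$, while a complementary torsion-free $K'$ embeds in $K$, and intersecting the distinguished free subgroup $F\sub K$ with $K'$ shows that $K'$ again satisfies condition~(3) with the same set $\pi$. The class is also closed under extensions by finitely generated abelian groups: given abelian $H$ with $H_0\in\mathcal H$ and $H/H_0$ finitely generated, the torsion of $H$ is an extension of the bounded group $T(H_0)$ by a finite group, hence bounded and split off, and a free subgroup witnessing~(3) for $K=H/T(H)$ is obtained by enlarging the one for $H_0$ by lifts of a basis of the free part of $H/H_0$, at the cost of enlarging $\pi$ by the finite set of primes dividing the order of the finite part. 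Granting these, let $G$ be finitely generated with abelian normal $A$ and $G/A$ polycyclic, and $H\sub G$ abelian. Then $A\in\mathcal B\sub\mathcal H$ by Lemmas~8 and~5.2 of~\cite{On the finiteness of certain soluble groups}, so $H\cap A\in\mathcal H$ by subgroup-closure; and $H/(H\cap A)\cong HA/A\sub G/A$ is abelian and polycyclic, hence finitely generated. Extension-closure now gives $H\in\mathcal H$.

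For the ``moreover'' clause I would build the ambient group explicitly from wreath products and split metabelian groups. Write $H=T\oplus K$ with $T$ of exponent $m$ and $\Z^{(r)}\sub K\sub R^{(r)}$ for $R=\Z[1/p:p\in\pi]$ and at most countable $r$; set $q=\prod_{p\in\pi}p$, so $R\cong\Z[t^{\pm1}]/(t-q)$ as a ring. The torsion part embeds in the derived subgroup $((\Z/m)\Wr\Z)'\cong\bigoplus_{\Z}\Z/m$ of the 2-generated metabelian group $(\Z/m)\Wr\Z$. For $K$ I would use the cyclic $\Z[s^{\pm1},t^{\pm1}]$-module $M=\Z[s^{\pm1},t^{\pm1}]/(t-q)\cong\bigoplus_{\Z}R$, on which $t$ acts as multiplication by $q$ and $s$ as the shift, and form the metabelian group $G_M$ that is the split extension of $M$ by $\langle s,t\rangle\cong\Z^2$; its derived subgroup $G_M'$ contains $(t-1)M=(q-1)M\cong\bigoplus_{\Z}R$, into which $K$ embeds. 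Assembling $T$ and $K$ into a single module $M_0$ over $\Z[s^{\pm1},t^{\pm1}]$ generated by at most two elements, and letting $G_0$ be the corresponding split metabelian group, I obtain $H\sub G_0'$.

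The main obstacle is the reduction to two generators while keeping $H$ inside the derived subgroup. The group $G_0$ is naturally generated by $s$, $t$ and the (at most two) module generators, and the decisive point is to fold the module generators into the two free directions $s,t$ -- replacing a generating set $\{v_1,v_2,s,t\}$ by a two-element set such as $\{v_1s,\,v_2t\}$ -- and to verify that these two elements still generate $G_0$. This is exactly the kind of generator-economy argument familiar from the theory of embeddings into 2-generated metabelian groups, and carrying it out (including the verification that the folded pair recovers the full module together with both $s$ and $t$, which forces the honest use of \emph{two} commuting variables for the infinite-rank localized part $K$) is where the real work lies; the bounded-torsion and free-abelian pieces are comparatively routine.
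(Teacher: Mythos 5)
Your reduction of the ``only if'' direction to two closure properties of $\mathcal H$ (under subgroups and under extensions with finitely generated abelian quotient) is sound and is essentially the paper's Lemma~\ref{nec} in different packaging, and your observation that the ``moreover'' clause implies the ``if'' direction is also how the paper proceeds. The problem is the ``moreover'' clause itself, which is the substance of the theorem: you reduce it to embedding $T\oplus K$ via a $\Z[s^{\pm1},t^{\pm1}]$-module $M_0$ with two generators and then ``folding'' the module generators into $s$ and $t$, but you explicitly leave the verification of this folding as ``where the real work lies.'' That is not a minor detail to be filled in routinely: as stated, the pair $\{v_1s,\,v_2t\}$ generates a subgroup whose derived subgroup contains only translates of mixed elements of the shape $v_1(t-1)-v_2(s-1)$ (up to units of the group ring), and there is no reason the original submodule $H\le M_0$, with its torsion and torsion-free parts entangled in this way, survives inside that derived subgroup. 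Even in the torsion-free case alone, the paper's Lemma~\ref{D(s)} folds only \emph{one} module generator into \emph{one} of the two free generators ($G=\langle a, b\mathbf{c}\rangle$), and the copy of $D(n)$ it finds in $[G,G]$ is not the original one but a re-assembled copy built from the element $\mathbf d=[b\mathbf c,a]$ supported on two coordinates, its $b$-conjugates, and its shifts by \emph{even} powers of $a$ -- an explicit computation your sketch does not attempt.

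The gap is most serious for the torsion summand. The paper does not obtain $Z(n)$ inside the derived subgroup by any direct module construction; instead it writes the torsion-free embedding group as $F_2/N$, studies the quotients $N/(N\cap A_m(F_2'))$ to manufacture an element of order $n$ in a suitable metabelian quotient $F_2/N_m$, and invokes the Magnus--Shmelkin embedding into a wreath product (Lemma~\ref{wr}) to show the normal closure of that element contains $Z(n)$; Kulikov's theorem then splits $Z(n)\oplus D(n)$ off inside $M'=(F_2/N_m)'$. This indirect route exists precisely because the naive ``assemble $T$ and $K$ into one two-generated module and fold'' plan does not obviously close up. So your proposal correctly identifies the target groups $Z(m)\oplus D(q)$ (matching the paper's Remark~\ref{Dn}) and the general flavour of the construction, but the central two-generator embedding is asserted rather than proved, and the specific mechanism you propose for it is unlikely to work without substantial modification.
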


  \medskip

Since by Baumslag - Remeslennikov theorem~\cite{Baumslag73, Remeslennikov73} every finitely generated metabelian group is embeddable into finitely presented metabelian group, we have

  \medskip

  \begin{Corollary} Let $H$ be an abelian group. The following properties are equivalent.

  (1) $H$ is a subgroup of a finitely generated metabelian group;

  (2) $H$ is a subgroup of finitely generated abelian-by-polycyclic group;

  (3) $H$ is a subgroup of finitely presented metabelian group;

  (4) $H$ is a subgroup of a $2$-generated metabelian group;

  (5) $H$ is a Hall group.
  \end{Corollary}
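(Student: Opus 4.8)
The plan is to prove the Corollary as a single cycle of implications
\[
(3)\Rightarrow(1)\Rightarrow(2)\Rightarrow(5)\Rightarrow(4)\Rightarrow(3),
\]
using throughout that ``$H$ is (isomorphic to) a subgroup of $G$'' is a transitive relation, so that a chain of embeddings $H\hookrightarrow G\hookrightarrow G^{*}$ again exhibits $H$ as a subgroup of $G^{*}$. Two of these arrows are immediate from the inclusions among the relevant classes of groups and carry no real content; the remaining three are supplied by the two results we are entitled to assume, namely Theorem~\ref{main} and the Baumslag--Remeslennikov theorem.

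First I would dispose of the elementary arrows. For $(3)\Rightarrow(1)$ it suffices to note that a finitely presented metabelian group is in particular finitely generated and metabelian, so a subgroup of it is a subgroup of a finitely generated metabelian group. For $(1)\Rightarrow(2)$ I would observe that if $G$ is finitely generated and metabelian, then its derived subgroup $G'$ is abelian and normal in $G$, while $G/G'$ is a finitely generated abelian group and hence polycyclic; thus $G$ is itself finitely generated abelian-by-polycyclic, and any subgroup of $G$ is a subgroup of such a group.

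The core of the argument is handled by Theorem~\ref{main}. The implication $(2)\Rightarrow(5)$ is exactly its ``only if'' half: an abelian subgroup $H$ of a finitely generated abelian-by-polycyclic group is a Hall group. Here the standing hypothesis of the Corollary, that $H$ is abelian, is precisely what makes the theorem applicable, and it is worth flagging explicitly that every arrow of the cycle preserves the abelianness of $H$, so there is no gap at this step. The implication $(5)\Rightarrow(4)$ is the ``moreover'' half of Theorem~\ref{main}: a Hall group $H$ embeds into the derived subgroup of some $2$-generated metabelian group $G$, and a fortiori $H$ is a subgroup of the $2$-generated metabelian group $G$. Together with the two elementary arrows this already closes the equivalence of $(1)$, $(2)$, $(4)$ and $(5)$.

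Finally I would splice $(3)$ into the cycle through $(4)\Rightarrow(3)$, the one place where the Baumslag--Remeslennikov theorem enters: given a $2$-generated (hence finitely generated) metabelian group $G$ containing $H$, that theorem embeds $G$ into a finitely presented metabelian group $P$, whence by transitivity $H$ is a subgroup of the finitely presented metabelian group $P$. I do not expect a genuine obstacle beyond this bookkeeping. All the actual difficulty---the classification of the admissible abelian subgroups through the three defining conditions of a Hall group, and their realization inside the derived subgroup of a $2$-generated metabelian group---has already been absorbed into Theorem~\ref{main}, and the passage to finitely presented metabelian groups is delivered ready-made by the Baumslag--Remeslennikov embedding. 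Thus the Corollary reduces to assembling these inputs into the displayed cycle.
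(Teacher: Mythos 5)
Your proposal is correct and matches the paper's (largely implicit) argument: the paper derives the Corollary from Theorem~\ref{main} together with the Baumslag--Remeslennikov embedding, exactly the two inputs you use, and the remaining arrows are the same routine inclusions. Your cycle $(3)\Rightarrow(1)\Rightarrow(2)\Rightarrow(5)\Rightarrow(4)\Rightarrow(3)$ is just a careful write-up of what the paper leaves to the reader.
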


  Recall that G.~Baumslag, U.~Stammbach, and R.~Strebel~\cite{Baumslag88} proved that the
  2-generated free metabelian group contains uncountably many non-isomorphic
  subgroups.  On the other hand, in 1990, reflecting on Hall's results of \cite{On the finiteness of certain soluble groups},   Gilbert Baumslag \cite{Baumslag90} wrote: {\it ``It is easy to see that the abelian subgroups of a free
metabelian group are free abelian. So there are only a countable number
of isomorphism classes of abelian subgroups of a free metabelian group of
finite rank. It appears likely that this observation holds also for finitely
generated metabelian groups as a whole, but I have not yet managed to prove
this.''} Unfortunately this conjecture of Baumslag is unprovable because using Theorem \ref{main} we
obtain:

\begin{Theorem}
\label{cardin} There is a 2-generated metabelian group containing
continuously many pairwise non-isomorphic abelian subgroups.
\end{Theorem}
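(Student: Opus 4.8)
The plan is to reduce the assertion to a statement about a single countable group and then invoke Theorem~\ref{main}. The crucial subtlety is that one cannot simply form a direct sum of continuum many Hall groups: such a sum would be uncountable and hence not a Hall group, and moreover Theorem~\ref{main} embeds each Hall group into a \emph{possibly different} 2-generated metabelian group. So the first step is to isolate the reduction: it suffices to exhibit one countable Hall group $A$ that \emph{already} contains continuum many pairwise non-isomorphic subgroups. Granting this, the ``moreover'' part of Theorem~\ref{main} embeds $A$ into the derived subgroup of some 2-generated metabelian group $G$, and then every subgroup of $A$ is an abelian subgroup of $G$, so $G$ is the group we want.

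Next I would locate the source of the continuum, since the Hall conditions rule out the naive ones. It cannot be the torsion part: a bounded torsion abelian group of a fixed exponent is a direct sum of cyclics of finitely many orders, so the countable bounded torsion groups fall into only countably many isomorphism classes. Nor can it come from rank~$1$: a rank-$1$ Hall group is, up to finite index, determined by the finite set $\pi$ of primes at which it is divisible together with a Baer type, and the Hall condition forcing $\pi$ finite leaves only countably many classes. Hence the continuum must be produced by torsion-free groups of rank $\ge 2$ localized at a single prime, which is exactly the regime the Hall restriction still permits.

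Concretely, I would fix a prime $p$ and set $A = \Z[1/p]^2$. This is a Hall group: it is torsion free, and taking $F = \Z^2$ gives $A/F \cong (\Z[1/p]/\Z)^2$, a $p$-group, so condition~(3) holds with $\pi = \{p\}$. Inside $A$ I would place the classical family of rank-$2$ torsion-free groups indexed by the $p$-adic integers: for a $p$-adic integer $\lambda$ with integer partial sums $\lambda_0,\lambda_1,\dots$ (so $\lambda_n \to \lambda$ $p$-adically), put
\[ G_\lambda = \Z e_1 + \Z e_2 + \sum_{n \ge 0} \Z\, p^{-n}(e_1 + \lambda_n e_2) \ \subseteq\ \Z[1/p]^2 = A, \]
where $e_1,e_2$ is the standard basis. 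Each $G_\lambda$ is a rank-$2$ torsion-free subgroup of $A$ that looks like $\Z^2$ at every prime $q\neq p$, so its isomorphism type is governed by its behaviour at $p$, i.e.\ by $\lambda$.

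The main obstacle is the pairwise non-isomorphism, and for this I would appeal to the classical structure theory of finite-rank torsion-free abelian groups (Baer, Kurosh, Malcev). An isomorphism $G_\lambda \to G_\mu$ extends to a $\Q$-linear automorphism of $\Q^2$ that respects the integral structure away from $p$, hence lies in a countable group of $p$-integral linear (Möbius) transformations; consequently the isomorphism type of $G_\lambda$ determines $\lambda$ only up to the action of this countable group. Since each orbit is countable while the set of $p$-adic integers is uncountable, there are continuum many orbits, and therefore continuum many pairwise non-isomorphic groups among the $G_\lambda$. Combining everything: these continuum many pairwise non-isomorphic subgroups all sit inside the single Hall group $A$, which by Theorem~\ref{main} embeds into the derived subgroup of a 2-generated metabelian group $G$; hence $G$ contains continuum many pairwise non-isomorphic abelian subgroups, as required. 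I expect the substantive step to be the verification that the $G_\lambda$ realize continuum many isomorphism types, rather than their construction or the embedding, which is handed to us by Theorem~\ref{main}.
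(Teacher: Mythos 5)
Your overall skeleton coincides with the paper's: reduce to exhibiting continuum many pairwise non-isomorphic subgroups inside a single Hall group sitting in $D_p^r\le D(p)$, and then invoke the embedding of $D(p)$ into the derived subgroup of a $2$-generated metabelian group (the paper cites Lemma~\ref{Z(n) x D(n)} directly; your appeal to the ``moreover'' clause of Theorem~\ref{main} is the same thing). Even the family of subgroups is the same: your $G_\lambda=\Z e_1+\Z e_2+\sum_n \Z\,p^{-n}(e_1+\lambda_n e_2)$ is exactly the paper's $A_{\pi_2,\dots,\pi_r}$ with $r=2$. Where you genuinely diverge is the proof of pairwise non-isomorphism. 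The paper quotes Example~5 of Section~88 of Fuchs and uses a \emph{rigid system}: choosing the $p$-adic units algebraically independent over $\Q$ forces $\mathrm{Hom}(A_{\pi},A_{\pi'})=0$ for distinct parameter sets, so distinctness of isomorphism classes is immediate (and one gets the stronger rigidity statement for free). You instead run an orbit-counting argument: any isomorphism $G_\lambda\to G_\mu$ extends uniquely to the common divisible hull $\Q^2$ and hence is realized by an element of the countable group $GL_2(\Q)$, so each isomorphism class meets only countably many parameters $\lambda$, and uncountably many $\lambda$ yield continuum many classes. This is more elementary --- no algebraic independence and no Hom computation is needed, and it works for all irrational $\lambda\in\Z_p$ --- at the price of proving less (no rigidity) and of one detail you should make explicit: that $\lambda\mapsto G_\lambda$ is countable-to-one, which follows because $G_\lambda$ recovers the line $\Q_p(e_1+\lambda e_2)$ as the set of infinitely $p$-divisible directions in $G_\lambda\otimes\Z_p$, hence recovers $\lambda$ when $\lambda\notin\Q$. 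With that point filled in, your argument is a correct alternative to the paper's citation of Fuchs, and the final assembly (these subgroups lie in the Hall group $D_p^2\le D(p)$, which embeds in a $2$-generated metabelian group) matches the paper exactly.
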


It follows from Theorem \ref{cardin} that the class of Hall groups $\mathcal H$ is bigger than
the class $\mathcal B$ since the set of non-isomorphic groups of $\mathcal B$ is countable. And we in fact  are able to explicitly build and example of a Hall group which is not a $\mathcal B$-group (see Section~\ref{Non-isomorphic subgroups of D(p)}).

V.A.~Roman'kov \cite{Romankov72, Romankov73} proved that every finitely generated nilpotent (every polycyclic) group is a subgroup
of a 2-generated nilipotent (resp. polycyclic) group.
In Sections~\ref{Embeddings of finitely generated abelian groups}, ~\ref{Further discussion} we present  two ``thrifty'' embeddings of finitely generated abelian groups into 2-generated metabelian, polycyclic   groups, and also examples of countable groups of solvable  length $l>1$ non-embeddable in finitely generated (l+1)-solvable groups, and two open questions.

\begin{Remark}
\label{Dn} Let $K$ be the subgroup from the above definition of Hall group.
   Since it is torsion free and countable, $K$ is embeddable into a
   countable direct power $\bigoplus{\Q}_i$ of the additive group of the rationals. Moreover,
   the basis elements of the subgroup $F$ can be mapped to the vectors $(0,\dots,0,1,0,\dots)$
   with one non-zero coordinate. Therefore $K$ becomes a subgroup of the group
   $\bigoplus_i (D_n)_i$, where $D_n$ consists of all rationals
   of the form $m/n^k$, with $n$ equal to the product of all primes from the set $\pi$.
   This observation and the property that the class $\mathcal H$ is closed under
   subgroups (Lemma 4.2 \cite{On the finiteness of certain soluble groups}) imply the equality ${\mathcal H}= \cup_{n=1}^{\infty} {\mathcal H}_n$, where ${\mathcal H}_n$ is the class
   of subgroups in the countable direct power of the group ${\Z}_n \oplus D_n$. (We use here
   that every bounded abelian group
   is embeddable in a direct sum of copies of ${\Z}_n$ for some $n$~\cite{Robinson}.)
   \end{Remark}

\section{Necessity}
\label{Necessity}

\begin{Lemma}\label{nec}
   Let $H$ be an abelian subgroup of a finitely generated abelian-by-polycyclic group
   $G$. Then $H$ is a Hall group.
   \end{Lemma}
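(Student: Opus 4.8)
The plan is to reduce everything to Ph.~Hall's two facts quoted above: that every abelian normal subgroup of a finitely generated group with polycyclic quotient lies in $\mathcal H$ (so $\mathcal B\sub\mathcal H$), and that $\mathcal H$ is closed under passage to subgroups. Write the hypothesis as $A\normal G$ with $A$ abelian and $Q:=G/A$ polycyclic, $G$ finitely generated, and put $A_1=H\cap A$. Since $A$ is an abelian normal subgroup of the finitely generated group $G$ with polycyclic quotient, $A\in\mathcal B\sub\mathcal H$; and because $\mathcal H$ is subgroup-closed, $A_1\in\mathcal H$ is already a Hall group. On the other hand $H/A_1\cong HA/A$ is a subgroup of the polycyclic group $Q$ and is abelian, hence a finitely generated abelian group. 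Thus the problem becomes the purely abelian statement: \emph{if an abelian group $H$ has a Hall subgroup $A_1$ with $H/A_1$ finitely generated abelian, then $H$ is a Hall group.} The remaining steps verify the three defining conditions for such an $H$.

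Countability is immediate, since $G$ and hence $H$ is countable. For the torsion part, let $T$ and $T_1=T\cap A_1$ be the torsion subgroups of $H$ and $A_1$. By the Hall property of $A_1$ the group $T_1$ is bounded, while $T/T_1$ embeds into the finite torsion subgroup of the finitely generated group $H/A_1$; an extension of a bounded group by a finite group is bounded, so $T$ is bounded. Being the full torsion subgroup, $T$ is pure, and a bounded pure subgroup is a direct summand, so $H=T\oplus K$ with $K$ torsion free. This gives conditions (1) and (2).

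It remains to produce, inside the torsion-free group $K$, a free abelian subgroup $F$ with $K/F$ a torsion group whose nontrivial primary components form a finite set. Let $\bar A_1$ be the image of $A_1$ in $K=H/T$; then $\bar A_1\cong A_1/T_1$ is torsion free and, again by the Hall property of $A_1$, contains a free abelian $F_1$ with $\bar A_1/F_1$ torsion supported on a finite prime set $\pi_1$. The quotient $K/\bar A_1$ is a quotient of $H/A_1$, hence finitely generated abelian; I would choose $x_1,\dots,x_s\in K$ projecting to a maximal independent system modulo the torsion of $K/\bar A_1$ and set $F=\langle F_1,x_1,\dots,x_s\rangle$. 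As $K$ is torsion free, $F$ is free abelian, and a short diagram chase shows $K/F$ is torsion: modulo $\bar A_1$ a suitable multiple of any element lands in $\langle x_1,\dots,x_s\rangle$, and modulo $F_1$ the remaining element of $\bar A_1$ is torsion. Finally the primary components of $K/F$ are controlled by those of $\bar A_1/F_1$, i.e.\ by $\pi_1$, together with those of the finite group $K/(\bar A_1F)$, so only finitely many primes occur. This gives condition (3) and shows $H\in\mathcal H$.

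I expect the genuine work to sit in the last paragraph --- the explicit construction of $F$ and the bookkeeping of primes for $K/F$ --- whereas the reduction in the first paragraph and the boundedness of $T$ are formal consequences of Hall's cited lemmas. The point to watch is that $F_1$ captures only the torsion-free rank of $\bar A_1$, so $F$ must be enlarged by the $x_i$ to absorb the free rank contributed by $K/\bar A_1$; ensuring that $K/F$ (and not merely $K/F_1$) is genuinely torsion is exactly what forces this enlargement and keeps the prime set finite.
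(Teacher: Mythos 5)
Your proof is correct and follows essentially the same route as the paper's: $H\cap B$ is a Hall group by Hall's lemmas and subgroup closure, $H/(H\cap B)$ is finitely generated abelian as a subgroup of the polycyclic quotient, the torsion subgroup of $H$ is bounded and split off by Kulikov's theorem, and the free subgroup of the torsion-free part is obtained by enlarging a free subgroup of (the image of) $H\cap B$ by lifts accounting for the free rank of the quotient, while the finite part only enlarges the prime set. The paper packages the last step as a chain $K_1\le K_2\le K$ with $K_2\cong K_1\oplus(K_2/K_1)$ free-by-construction and $K/K_2$ finite, which is the same bookkeeping as your explicit $F=\langle F_1,x_1,\dots,x_s\rangle$; the only point to tighten is your one-line claim that $F$ is free ``as $K$ is torsion free'' --- the correct justification is that $\langle x_1,\dots,x_s\rangle\cap\bar A_1=0$ by the choice of the $x_i$ as independent modulo torsion, whence $F=F_1\oplus\langle x_1,\dots,x_s\rangle$.
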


   \begin{proof} Let $B$ be an abelian normal subgroup of $G$ with polycyclic quotient $P=G/B$.
   By the inclusions $B\in\mathcal B \subset \mathcal H$,
   we have that the torsion subgroup $S$ of $B$ and $H\cap S$ are bounded groups.
   The quotient $H/(H\cap B) \cong HB/B$ is finitely generated being a subgroup of the polycyclic
   group $P,$ and so the torsion subgroup of it also is bounded. It follows that the
   torsion subgroup $T$ of $H$ is bounded as well, and so $H=T\oplus K$ by Kulikov's theorem~\cite{Fuchs, Robinson}.
    The torsion free group $K$ can be now regarded as a subgroup of the group $G/S$ with
    torsion free $C=B/S.$

    Since $P$  and its abelian subgroup $K/(K\cap C )$ are polycyclic, we have a series of subgroups $K_1\le K_2\le K$
    with torsion free $K_1=K\cap C$, free abelian $K_2/K_1$, and finite $K/K_2.$
   Since  $C\in {\mathcal B} \subset\mathcal H,$ we have that $K_1\in \mathcal H$ by Remark \ref{Dn},
   and moreover $K_1\in {\mathcal H}_n$ for some $n$.
   Since $K_2/K_1$ is free abelian, we have an isomorphism $K_2\cong K_1 \oplus (K_2/K_1)$,
    and therefore $K_2 \in {\mathcal H}_n$ holds. In turn, this implies
   that $K$ itself belongs to some ${\mathcal H}_m$ because it is torsion free, and the
   free subgroup $F$ of $K_2$ (from the definition of Hall group  applied to $K_2$) will work for $K$ as well
   (but       with a bigger finite set $\pi'$) since $K/K_2\le \infty$.

   Thus $H=T\oplus K$ with the required properties of $T$ and $K$. \end{proof}

   \begin{Remark} It is seen from the proof, that $H\in {\mathcal H}_k$, where $k$ depends only on $G$.
   \end{Remark}

\section{The embedding of $(D_n)^{\infty}$}
\label{The embedding of D_n}

Let $D(n)$ be the countable direct power of the group
   $D_n$ from Remark \ref{Dn}.

   \begin{Lemma}\label{D(s)}
The group $D(n)$  embeds into the derived subgroup of a $2$-generated metabelian group $G$.
In addition, $G/[G,G]$ is a free abelian group of rank 2.
\end{Lemma}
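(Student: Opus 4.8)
The plan is to exhibit $D(n)$ as the additive group underlying a \emph{cyclic} module over the group ring $R=\Z[x^{\pm1},y^{\pm1}]$ of the free abelian group $\Z^2=\langle x\rangle\times\langle y\rangle$ of rank $2$, and then to realize that module as the derived subgroup of a $2$-generated metabelian group. Set $M=R/(x-n)R$, a cyclic $R$-module with generator $m_0$ the image of $1$. Inverting $x$ forces $n$ to become a unit, so there is an isomorphism $M\cong D_n[y^{\pm1}]$ carrying the action of $x$ to multiplication by the scalar $n$; hence, as an abelian group, $M=\bigoplus_{k\in\Z}D_n\,y^{k}\cong D(n)$. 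Thus $D(n)$ is already visible as a cyclic module, and the task reduces to mounting this module inside a $2$-generated metabelian group with free abelian quotient of rank $2$.

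Next I would form the split extension $E$ of $M$ by $\Z^2$, in which $x$ and $y$ act on $M$ through its module structure; write $a,b$ for the images in $E$ of the generators of $\Z^2$, so that $a m a^{-1}=x\cdot m$ and $b m b^{-1}=y\cdot m$ for $m\in M$. Then $E$ is metabelian, and so is every subgroup. Taking the generators to be $a,b$ fails, since they commute and generate only a complement $\cong\Z^2$; the essential point is to twist the second generator and put $G=\langle\, a,\ b\,m_0\,\rangle\le E$. A direct commutator computation gives
\[
[\,a,\ b m_0\,]=(x-1)\,y\cdot m_0 .
\]
Because $M$ is abelian, conjugation by elements of $G$ acts on $G'\subseteq M$ through the image of $G$ in $E/M\cong\Z^2$, under which $a$ and $b m_0$ act as $x$ and $y$; hence $G'$, being the normal closure in $G$ of this single commutator, is the cyclic $R$-submodule it generates. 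Using that $y$ is a unit and that $Rm_0=M$, this submodule is exactly $(x-1)M$.

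Since $x$ acts on $M$ as the scalar $n$, the operator $x-1$ acts as multiplication by $n-1$, so $G'=(x-1)M=(n-1)M$. As $M\cong D(n)$ is torsion free, multiplication by $n-1$ is injective for $n\ge 2$, whence $G'=(n-1)M\cong D(n)$, and $D(n)$ embeds (isomorphically onto $G'$) into the derived subgroup of the $2$-generated metabelian group $G$. It remains to compute $G/G'$. Being $2$-generated, $G/G'$ is a quotient of $\Z^2$; on the other hand the projection $E\to\Z^2$ carries $a$ and $b m_0$ onto a generating set of $\Z^2$, so $G/G'$ maps \emph{onto} $\Z^2$. A quotient of $\Z^2$ that surjects onto $\Z^2$ must be free abelian of rank $2$ by Hopficity, which gives $G/[G,G]\cong\Z^2$ simultaneously with the identification $G'=G\cap M$.

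The step I expect to be the main obstacle is the exact determination of $G'$: a priori it is only the normal closure of one commutator, and one must verify that this normal closure really is the full cyclic submodule $(x-1)M$, and that the resulting abelian group is $D(n)$ rather than a proper modification of it. The scalar identity ``$x-1$ acts as $n-1$'' is what makes this transparent, but it also flags the degenerate value $n=1$, where $n-1=0$ collapses $G'$; that case ($D_1=\Z$, so $D(1)=\bigoplus_{\N}\Z$) is handled separately, for instance by the lamplighter-type group $\Z\Wr\Z$, whose derived subgroup is free abelian of countable rank and whose abelianization is $\Z^2$, or simply by noting $D(1)\le D(2)$ so that the case $n=2$ already supplies an embedding.
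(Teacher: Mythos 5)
Your construction is, up to relabeling which generator shifts and which multiplies by $n$, exactly the paper's: the same split extension of $D(n)\cong R/(x-n)R$ by $\Z^2$, the same twisted generating pair $\{a,\,b m_0\}$, and the same key commutator whose normal closure is $[G,G]$, with the same projection argument for $G/[G,G]\cong\Z^2$. The only divergence is the endgame: the paper exhibits a copy of $D(n)$ inside $[G,G]$ as a sum of disjointly supported shifts of a copy of $D_n$, whereas you compute $[G,G]=(x-1)M=(n-1)M$ exactly and use torsion-freeness to conclude $(n-1)M\cong D(n)$; both arguments are correct (your identification $G'=\langle\langle[a,bm_0]\rangle\rangle = R\cdot(x-1)ym_0=(x-1)M$ checks out, as does your separate treatment of $n=1$).
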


\begin{proof} Let us regard the elements of $D(n)$ as vectors ${\bf x}=(\dots,x_{-1},x_0,x_1,\dots)$
   with finitely many nonzero coordinates  $x_i=m_i/n^{k_i}$ ($i,m_i,k_i\in \Z, k_i\ge 0$).
We will use two automorphisms $\alpha$ and $\beta$ of the additive group $D(n)$, namely,
$\alpha$ shifts the coordinates, i.e., maps arbitrary vector $\bf x$ to $\bf y$, where
$y_i = x_{i-1}$ ($i\in \Z$), and $\beta$ multiplies all the vectors by $n$:
$\beta({\bf x})= n{\bf x}$.

 Since the automorphisms $\alpha$ and $\beta$ commute, we can build a split extension
 $S$ of $D(n)$ by a free abelian group $A=\langle a, b \rangle$   acting
 by conjugation  as follows: $a^{-1}{\bf x}a=\alpha({\bf x}), b^{-1}{\bf x}b=\beta({\bf x})$.
Since $b{\bf x} b^{-1}={\bf x}/n$ for any ${\bf x}\in D(n)$, we see that the normal closure in $S$ of the vector ${\bf c}=(\dots,0,1,0,\dots)$, where $1=c_0$, is the entire subgroup $D(n),$ and so $S=\langle a,b, {\bf c} \rangle $.

Now compute the commutator ${\bf d}= [b{\bf c}, a]$ using the additive notation for the elements from $D(n)$:
$${\bf d}=(-{\bf c})b^{-1}a^{-1}b{\bf c} a= (-{\bf c})a^{-1}{\bf c} a= (-{\bf c})+(a^{-1}{\bf c} a),$$
and so $d_0=-1, d_1=1$, and other coordinates of ${\bf d}$ are $0$.  Denote by $G$ the
metabelian subgroup generated by $a$ and $f=b{\bf c}$. It contains $\bf d$ in the derived subgroup $[G,G]$ and, for positive integers $k$-s, contains
 all the elements $(f)^k {\bf d} (f)^{-k} = b^{k} {\bf d} b^{-k}= {\bf d}/n^k$.  Therefore $[G,G]$
 contains a copy $D'$ of the group $D_n$, and all coordinates of the vectors $\bf z$ from $D'$
 are zero except for $z_0$ and $z_1$. It follows that the shifts $a^{-l}D'a^{l}$ with even
 exponents $l$-s generate in $[G,G]$, a subgroup isomorphic with $D(n)$.

 To prove the last statement of the lemma, we observe that the projection of the split extension $S$
 onto the free abelian group $A$ maps the generators $a$ and $f$ of the group $G$ to the free generators
 of $A$. It induces an epimorphism of the $2$-generated group $G/[G,G]$ onto $A$ that obviously
 must be an isomorphism.
    \end{proof}

\section{Embeddings of arbitrary Hall groups}
\label{Embeddings of arbitrary Hall groups}

We denote by $Z(n)$ the countable direct
  power of a cyclic group of order $n$. Let $\A_m$ be the variety of abelian groups of exponents dividing the positive integer $m$ and let $A_m(X)$ be the verbal subgroup of a group $X$ corresponding to the variety $\A_m$,
i.e., $A_m(X)= [X,X]X^m$ ~\cite{HannaNeumann}.

\begin{Lemma}\label{wr} Let $F$ be a free group and $L$ a normal subgroup of infinite
index in $F$. Assume that the quotient $L/A_m(L)$ has an element $g$ of order $n\ge 1$. Then
the normal closure of this element in $F/A_m(L)$ has a subgroup isomorphic to $Z(n)$.
\end{Lemma}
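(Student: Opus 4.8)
The plan is to realize $\bar F:=F/A_m(L)$ inside a wreath product and to read off the copy of $Z(n)$ from suitable disjoint translates of $g$. First note that $A_m(L)=[L,L]L^m$ is a verbal, hence fully invariant, subgroup of $L$, so it is characteristic in $L$; since $L\normal F$, this forces $A_m(L)\normal F$, and $\bar F$ is a genuine group with normal abelian subgroup $\bar L:=L/A_m(L)$ of exponent dividing $m$. Put $Q:=F/L$, which is infinite precisely because $L$ has infinite index. As $\bar L$ is abelian and normal in $\bar F$, conjugation makes it a module over $(\Z/m)[Q]$ (the action of $\bar F$ factors through $\bar F/\bar L=Q$), and the normal closure of $g$ in $\bar F$ is exactly the subgroup of $\bar L$ generated by all translates $q\cdot g$, $q\in Q$.

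The key step is a mod-$m$ version of the Magnus embedding. Let $M$ be the free $\Z/m$-module on a basis indexed by the free generators of $F$, and send each generator $x_i$ to $(\delta_i,\bar x_i)\in M\Wr Q$, where $\bar x_i$ is the image of $x_i$ in $Q$ and $\delta_i$ is the corresponding basis vector placed at the identity coordinate of the base group $B:=\bigoplus_{q\in Q}M$; on $B$ the group $Q$ acts by the regular shift, so $B$ is a free permutation module. I claim the induced homomorphism $F\to M\Wr Q$ has kernel exactly $A_m(L)$, giving an embedding $\bar F\hookrightarrow M\Wr Q$ under which $\bar L$ lands $Q$-equivariantly in $B$. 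The inclusion $A_m(L)\sub\ker$ is clear, since every element of $L$ maps into the abelian exponent-$m$ group $B$. For the reverse inclusion I would reduce modulo $m$ the classical integral Magnus embedding, under which $L/[L,L]$ sits inside the free module $B_0:=\Z[Q]\otimes M_0$ (here $M_0$ is the free abelian group on the generators) with cokernel the augmentation ideal $I_Q\sub\Z[Q]$. Since $I_Q$ is torsion free, one has $(mB_0)\cap(L/[L,L])=m\,(L/[L,L])$ inside $B_0$ (equivalently $\mathrm{Tor}_1(\Z/m,I_Q)=0$, its $m$-torsion being zero), and this is exactly the statement that reduction mod $m$ keeps the map injective, i.e. that the kernel on $L$ drops only to $[L,L]L^m=A_m(L)$.

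With the embedding in hand the remainder is combinatorial. View $g$ as a nonzero, finitely supported function in $B$ of additive order $n$, and set $S:=\{q\in Q:g(q)\ne 0\}$, a finite subset of $Q$. The translate $q\cdot g$ is supported on $qS$ and again has order $n$. Because $Q$ is infinite while $SS^{-1}$ is finite, a greedy selection yields elements $q_1,q_2,\dots\in Q$ with the sets $q_jS$ pairwise disjoint: at stage $j$ the forbidden set is the finite union $\bigcup_{i<j}q_iSS^{-1}$, so a new $q_j$ always exists. Elements with pairwise disjoint supports are independent in the abelian group $B$, whence $\langle q_1\cdot g,q_2\cdot g,\dots\rangle=\bigoplus_{j}\langle q_j\cdot g\rangle\cong\bigoplus_{j}\Z/n=Z(n)$. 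Each $q_j\cdot g$ is the image of a conjugate of $g$, so it lies in the image of the normal closure of $g$; since $\bar L\to B$ is injective, the normal closure of $g$ in $\bar F$ contains a copy of $Z(n)$, as required. (For $n=1$ the assertion is vacuous.)

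The main obstacle is the second paragraph: verifying that the integral Magnus embedding remains injective after reducing its base modulo $m$, i.e. that replacing $[L,L]$ by $A_m(L)=[L,L]L^m$ costs nothing beyond the evident exponent-$m$ collapse. This is where torsion-freeness of the augmentation ideal $I_Q$ is essential, and it is also the only place where the precise verbal subgroup $A_m(L)$ (rather than merely $[L,L]$) enters. Everything afterwards uses only that $B$ is a free permutation module and that $Q$ is infinite, the latter being guaranteed by the infinite-index hypothesis, which is exactly what makes the passage to a \emph{countable} direct sum $Z(n)$ possible.
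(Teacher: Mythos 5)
Your proof is correct and follows essentially the same route as the paper: embed $F/A_m(L)$ into a wreath product over the infinite quotient $F/L$ so that $L/A_m(L)$ lands $Q$-equivariantly in the abelian base group, then harvest infinitely many conjugates of $g$ with pairwise disjoint supports to generate a copy of $Z(n)$ inside the normal closure. The only difference is that the paper simply invokes the Magnus--Shmel'kin theorem on verbal wreath products to get the embedding, whereas you derive the needed mod-$m$ case from the integral Magnus embedding via torsion-freeness of the augmentation ideal; both are valid.
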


\begin{proof} Since the variety $\A_m$ is abelian, the verbal $\A_m$-products of groups (see~\cite{HannaNeumann, Shmel'kin Wreath products and varieties}) coincide
with standard direct (restricted) wreath products. Therefore by Magnus - Shmelkin's
theorem~\cite{Shmel'kin Wreath products and varieties}, the groups $Y=F/A_m(L)$ embeds in the wreath product $V=B wr (F/L)$, where $B\cong F/A_m(F).$
Recall that $V$ is the semidirect product $CW$, where $C=F/L$ and
$W$ is the direct power of $B$ with the right regular action of the group $C$ by
conjugation on the set of direct factors. (The direct factors $B(c)$ of $W$ are isomorphic
to $B$ an indexed by the elements $c\in C$.) The embedding enjoys the properties:
$YW=V$ and (the image of) $L/A_m(L)$ becomes a
subgroup of $W$.

Thus, (under the embedding) the element $g$ belongs to a product $B(c_1)\times\dots\times B(c_t)$ for a finite set $\{c_1,\dots,c_t\}\subset C$.
Since $C$ is infinite, there is $d\in C$ such that the support $\{c_1d,\dots,c_td\}$ of the
conjugate elements $g_1=d^{-1}gd$ is disjoint with the set $\{c_1,\dots,c_t\}$. Recall that $d=yw$ for some $y\in Y,$ $w\in V$. Since $W$ is abelian, we have $g_1=ygy^{-1}$, and so $g_1$ belongs to the normal closure of
$g$ in $Y$, and it generates, together with $g$, the direct product of two cyclic groups of
order $n.$ Keep choosing elements $g_2, g_3,\dots$  in this manner, one can
obtain the elements of the normal closure of $g$ in $Y$, which generate  a subgroup  isomorphic to $Z(n)$.
\end{proof}

  \begin{Lemma} \label{Z(n) x D(n)}
  The abelian group $Z(n) \oplus D(n)$ embeds into the derived subgroup of a $2$-generator metabelian group $M$.
\end{Lemma}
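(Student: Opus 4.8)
The plan is to realize the two direct summands inside two separate $2$-generated metabelian groups that share a common free abelian quotient of rank $2$, and then to splice them together inside a single subdirect product. For the torsion-free part I would take the group $G$ produced by Lemma~\ref{D(s)}: it is $2$-generated metabelian, its derived subgroup contains a copy of $D(n)$, and crucially $G/[G,G]$ is free abelian of rank $2$. For the bounded part I would produce a companion group $N$ via Lemma~\ref{wr}: take $F$ free of rank $2$ and $L=[F,F]$, which has infinite index; then $L/A_n(L)\cong\bigoplus\Z_n$ is a direct sum of copies of $\Z_n$, so it has an element $g$ of order exactly $n$, and Lemma~\ref{wr} places a copy of $Z(n)$ into the normal closure of $g$ in $N:=F/A_n(L)$. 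Here $N$ is again $2$-generated metabelian with $N/[N,N]\cong F/[F,F]$ free abelian of rank $2$, and the copy of $Z(n)$ lies inside $[N,N]=L/A_n(L)$, a group of exponent dividing $n$.

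Next I would combine $G$ and $N$. Fix isomorphisms $G/[G,G]\cong\Z^2\cong N/[N,N]$ carrying a generating pair $a,f$ of $G$ and a generating pair $x,y$ of $N$ to one and the same basis $e_1,e_2$ of $\Z^2$. Inside $G\times N$ set $M=\langle(a,x),(f,y)\rangle$. Then $M$ is $2$-generated and metabelian (being a subgroup of the metabelian group $G\times N$), its two generators map to the diagonal basis of $\Z^2\le\Z^2\times\Z^2$ so that $M/[M,M]\cong\Z^2$, and $[M,M]$ is the cyclic module over $R=\Z[\Z^2]=\Z[s^{\pm1},t^{\pm1}]$ (with $s=e_1$, $t=e_2$) generated by the single commutator $w=[(a,x),(f,y)]=([a,f],[x,y])$. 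Thus $[M,M]$ is the ``diagonal'' submodule $\{(rw_1,rw_2):r\in R\}$ of $[G,G]\times[N,N]$, where $w_1=[a,f]$ generates $[G,G]\supseteq D(n)$ and $w_2=[x,y]$ generates $[N,N]\supseteq Z(n)$.

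The heart of the argument is to show that this diagonal module splits, i.e. that $[M,M]\cong[G,G]\oplus[N,N]\supseteq D(n)\oplus Z(n)$. Writing $I_1=\mathrm{Ann}_R(w_1)$ and $I_2=\mathrm{Ann}_R(w_2)$, so that $R/I_1\cong[G,G]$ and $R/I_2\cong[N,N]$, it suffices to prove $I_1+I_2=R$; then the Chinese Remainder Theorem gives $R/(I_1\cap I_2)\cong R/I_1\oplus R/I_2$, which is exactly the desired splitting of $[M,M]$. The coprimality $I_1+I_2=R$ is the point at which the two summands genuinely cooperate: on $[G,G]\subseteq D(n)$ multiplication by $n$ coincides with the action of $t^{-1}$ (in Lemma~\ref{D(s)} the element $f=b\mathbf c$ acts on $D(n)$ by division by $n$), hence is an automorphism, so $n$ is a unit modulo $I_1$; on the other hand $[N,N]$ has exponent dividing $n$, so $n\in I_2$. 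Choosing $u\in R$ with $nu\equiv1\pmod{I_1}$ we obtain $1=nu-(nu-1)\in I_2+I_1$, whence $I_1+I_2=R$.

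I expect this coprimality step to be the only genuine obstacle: it is precisely where the torsion-free, $n$-divisible nature of $D(n)$ is played against the exponent-$n$ nature of $Z(n)$, and everything else (that $M$ is $2$-generated metabelian, that $[M,M]$ is the stated cyclic module, and that the splitting produces the embedding into the derived subgroup) is routine. As an alternative that avoids Lemma~\ref{wr}, I could build $M$ directly by repeating the construction of Lemma~\ref{D(s)} on the enlarged module $D(n)\oplus Z(n)$, letting the shift act on both summands while the multiplication-by-$n$ automorphism acts trivially on the summand $Z(n)=\Z_n[s^{\pm1}]$; the same coprimality computation then shows that the derived subgroup of $M=\langle a,\,b\mathbf c\rangle$ equals $Z(n)\oplus D(n)$, with $\mathbf c$ now chosen to have a nonzero component in each summand.
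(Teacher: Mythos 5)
Your argument is correct, but it takes a genuinely different route from the paper's. The paper stays inside a single tower of quotients of the free group $F_2$: writing $G\cong F_2/N$ for the group of Lemma~\ref{D(s)}, it passes to $M=F_2/N_m$ with $N_m=N\cap A_m(F_2')$, where $m$ is chosen large enough (via an argument about growing orders of elements in the quotients $N/N_{p^k}$) that $N/N_m$ contains an element of order $n$; Lemma~\ref{wr} then enlarges this element to a copy of $Z(n)$ inside the torsion subgroup $T=N/N_m$, and Kulikov's theorem splits $U\cong T\times D(n)$. Your construction is the fibre product of $G$ with $F/A_n([F,F])$ over the common quotient $\Z^2$, which, unwound, is exactly $F_2/(N\cap A_n(F_2'))$, i.e.\ the paper's group with $m=n$; you replace both the delicate choice of large $m$ and the appeal to Kulikov by the module-theoretic Chinese Remainder argument. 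The comaximality $I_1+I_2=R$ does hold for the reason you give: conjugation by $f=b\mathbf{c}$ acts on $[G,G]\le D(n)$ as multiplication by $n$, and since $[G,G]$ is torsion free and closed under division by $n$ this makes $n$ a unit modulo $I_1$, while $n\in I_2$ because $[F,F]/A_n([F,F])$ has exponent dividing $n$. Two minor observations: you do not actually need Lemma~\ref{wr} at all, since $[F,F]$ is free of countably infinite rank and hence $[F,F]/A_n([F,F])$ is already isomorphic to $Z(n)$ in its entirety; and in your closing alternative the derived subgroup is only the augmentation-ideal part $(s-1)\bigl(D(n)\oplus Z(n)\bigr)$ rather than the whole module, though that part is still isomorphic to $Z(n)\oplus D(n)$, which is all the lemma requires. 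What your route buys is a cleaner splitting (no Kulikov, no hunt for a suitable $m$) at the cost of setting up the $\Z[\Z^2]$-module formalism; the paper's route needs only elementary facts about bounded torsion subgroups of abelian groups.
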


\begin{proof} We may assume that $n>1$ (e.g., by Lemma \ref{D(s)}).

Let $F_2$ be a $2$-generated free group. The $2$-generated group $G$ from Lemma \ref{D(s)} can be presented as $G\cong F_2/N$, where the normal subgroup $N$ contains the second derived subgroup $F_2''$
since $G$ is metabelian. On the other one hand, $N\ne F_2''$ since the quotient $F_2/F_2''$ is an
extension of the free abelian group $F_2'/F_2''$ by the free abelian group $F_2/F_2'$, and so it
cannot  contain an element divisible by all powers $n^k$ for $n\ge 2.$ Observe also
that $N\le F_2'$ since $F_2/N$ maps onto $F_2/F_2'$ by the second statement of Lemma \ref{D(s)}.
Hence $N/F_2''$ is a non-trivial subgroup of the free abelian group $F_2'/F_2''$.

Since the group $F_2'/F_2''$
is free abelian, the intersection $\cap_{m\in I} A_m(F_2'/F_2'')$ is trivial for arbitrary
infinite set $I$ of positive integers, i.e., $\cap_{m\in I} A_m(F_2')=F_2''.$
Denote $N_m = N \cap A_m(F')$. Since
$$
N/N_m \cong N A_m(F') \, / \, A_m(F') \le F' / A_m(F') \in \A_m,
$$
the group $N/N_m$ has finite exponent dividing $m$. Taking large enough powers of a prime $p$ as the values of $m$ we can get elements with infinitely growing orders in the groups $N/N_m$, for, if the orders of all elements of $N/N_m$ for all $m=p^k$ were restricted from above by a single number, then the quotient
$N/(\cap_{k=1}^{\infty} N_{p^k})$ would have a finite exponent, a contradiction with the facts that  $\cap_{k=1}^{\infty} N_{p^k}\le \cap_{k=1}^{\infty} A_{p^k}(F_2')=F_2''$ and $N/F''$ is a nontrivial
torsion free group.
Hence for a given $k\ge 1$, we can take $m$ large enough so that the exponent of the group $N/N_m$  to be
divisible by $p^k.$  Since one can choose such $m_1,..., m_s$ for every prime-power divisor $p^{k_i}$ of
$n$, there is $m=m_1\dots m_s$ such that the abelian group $N/N_m$ has an element of order $n.$

The group $N/N_m$ is isomorphic to the normal subgroup $N A_m(F_2') \, / \, A_m(F_2')$ of $F_2/ A_m(F_2')$. By Lemma \ref{wr} (with $L=F_2'$), both these groups contain subgroups isomorphic with
$Z(n)$.

By the choice of $G$, the abelian derived subgroup of the $2$-generator metabelian group $M =F_2/N_m$  contains
a subgroup $U=R/N_m$ such that $R/N\cong D(n).$  Therefore the subgroup $
T=N/N_m$ contains all torsion elements of $U$, has exponent $\le m$, and by Kulikov's theorem~\cite{Robinson, Fuchs},
$U\cong T\times D(n)$. Since $T$ contains a subgroup $Z(n)$, the group $M'$ contains
a subgroup isomorphic with $Z(n)\times D(n)$, as required.
\end{proof}

{\bf Proof of Theorem \ref{main}} The statements of the theorem follow from Lemmas \ref{nec}, \ref{Z(n) x D(n)}, and Remark \ref{Dn}. $\Box$

\medskip

\section{Non-isomorphic subgroups of $D(p)$}
\label{Non-isomorphic subgroups of D(p)}

The set of non-isomorphic groups $B$
in the class $\mathcal B$ is countable. Indeed Ph. Hall observed~\cite{On the finiteness of certain soluble groups} that each
$B$ is a finitely generated module over the group ring ${\Z}P$ of a polycyclic
group $P$ and he proved that such modules are Noetherian. Thus the set of non-isomorphic
${\Z}P$-modules is countable, and it suffices to take into account that the set
of non-isomorphic polycyclic groups also is countable.

 We want to prove that for every prime $p$, the group $D(p)$ has $2^{\aleph_0}$
 non-isomorphic subgroups, which together with Theorem \ref{main} prove Theorem 2.
 Our proof is based on Chapter XIII of Fuchs' book~\cite{Fuchs}.

The system of torsion free abelian groups $\{ G_i | i \in I \}$ is said to be {\it rigid} if $Hom (G_i,G_i) \le \Q$, and  $Hom (G_i,G_j) \cong 0$ for any $i,j\in I$, $i \not= j$. In other words, each endomorphism of a group in a rigid system is a multiplication by a rational number, and there only is zero homomorphism between two distinct groups of the system.
An example of a rigid system of continuum cardinality is constructed in Example 5, Section 88 \cite{Fuchs}. Given a prime $p$ and $r\ge 2$, take $r-1$  algebraically independent (over rational field $\Q$) $p$-adic units $\pi_2, \ldots, \pi_r$, and take an extra $\pi_1=1$.
Let $\pi_{in}$ be the $n-1$-th partial sum of the canonic presentation of $\pi_i$:
$$
\pi_i = s_{i0}+ s_{i1}p + \cdots + s_{in}p^n + \cdots \quad (0 \le s_{in} < p).
$$
Denote:
$$
x_n= p^{-n}(a_1 + \pi_{2n}a_2 + \cdots + \pi_{rn} a_r),
$$
where $a_1, \ldots, a_r$ is a basis of in the vector space $\Q^r$,
and take the group:
$$
A_{\pi_2, \ldots, \pi_r} = \langle
a_1, \ldots, a_r, \, x_1, \ldots, x_n , \ldots
\rangle.
$$
From the construction of $x_n$ above it is clear that $A_{\pi_2, \ldots, \pi_r}$  is a subgroup in the  power $D_p^r \le D(p)$. As it easily follows from the properties of $p$-adic numbers, taking a bigger set of algebraically independent $p$-adic units $\pi_2,\dots,\pi_s, \pi'_2, \ldots, \pi'_r$ we get that  $Hom(A_{\pi_2, \ldots, \pi_r} , A_{\pi'_2, \ldots, \pi'_r}) = 0$ (see Example 5, Section 88 \cite{Fuchs}). In particular, since there are continuously many algebraically independent $p$-adic units, we have $2^{\aleph_0}$  pairwise non-isomorphic subgroups in $D_p^r$.

Thus Theorem~\ref{cardin} is proved: $2^{\aleph_0}$ pairwise non-isomorphic subgroups $A_{\pi_2, \ldots, \pi_r}$ are contained in $D(p)$, and the latter has an isomorphic embedding in a 2-generator metabelian group by Lemma~\ref{Z(n) x D(n)}.

\vskip3mm
Theorem~\ref{cardin} also means that the classes of  ${\mathcal B}$-groups and of Hall groups are different, although there is similarity in their definitions (a ${\mathcal B}$-group is the Hall group, which is not only embeddable into a finitely generated abelian-by-polycyclic group, but also has a normal embedding in such a group). In particular, it is easy to see that these two classes coincide over torsion groups.
%
%
Let us conclude this section by an explicit example of a Hall group which is not a ${\mathcal B}$-group.

\begin{Example}
Let $G$ be the group built in 4.4.2 in~\cite{Robinson}. $G$ is an indecomposable  abelian group of rank two, and is defined as a subgroup in 2-dimensional vector space $\Q^2$ with basis $u$, $v$ as follows: for three distinct primes $p, q, r$ set $G$ to be the subgroup of $\Q^2$ generated by all elements:
$$
p^m u, \quad q^m, \quad r^m(u+v)
$$
for all integer values $m$. Evidently $G$ is a Hall group, since it is subgroup in $D_{pqr}^2$.

Assume that $G\in \mathcal B$, i.e., we are able to normally embed $G$ into an abelian-by polycyclic finitely generated group $M$. Then the centralizer
$C=C_M(G)$ contains the abelian group $G$ and has index $[M:C]\le 2$ since $M/C$ faithfully acts  on $G$ by conjugation automorphisms, and
the only automorphisms of $G$ are $\pm id_G$ (see 4.4.2 in~\cite{Robinson}). Therefore $C$ is also a finitely
generated abelian-by polycyclic group, and by ~\cite{On the finiteness of certain soluble groups},
$G$ is a finitely generated $C$-module. Since the action of $C$ on $G$ is trivial, $G$ has to be a finitely
generated abelian group, a contradiction.
%
%
%
\end{Example}

\section{Embeddings of finitely generated abelian groups}
\label{Embeddings of finitely generated abelian groups}

\begin{Proposition} Every finitely generated abelian group $H$ is isomorphic to
the center  of a $2$-generated metabelian, nilpotent group $G$. If $H$ is finite
then so is $G$.
 \end{Proposition}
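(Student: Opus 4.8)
The plan is to realise $H$ as the fixed subgroup of an abelian normal subgroup $A\triangleleft G$ under the conjugation action of $G/A$, after arranging that $G$ has no central elements outside $A$. Concretely I look for $G=\langle s,t\rangle$ with an abelian normal $A$ on which $G/A$ acts; writing the action additively, an element $g\in A$ is central iff it lies in the joint kernel of the operators $\bar h-1$, $\bar h\in G/A$. Thus if the action is faithful enough that every element of $G\setminus A$ moves $s$ or $t$, then $Z(G)$ is exactly this fixed subgroup of $A$, and the whole task reduces to producing a \emph{cyclic} module (so $G$ stays $2$-generated) whose fixed subgroup is isomorphic to $H$, with the operators acting nilpotently (so $G$ is nilpotent) and $A$ abelian (so $G$ is metabelian).

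For finite $H$ I would take $A\rtimes\langle t\rangle$ with $\langle t\rangle$ finite and $A$ a finite cyclic $\Z[u]$-module, $u=t-1$ acting nilpotently; here $Z(G)=\ker(u\mid A)$ provided $t$ has order equal to the (unipotent) order of $1+u$, which makes the action faithful and kills the contribution of the powers of $t$. It then suffices to realise a finite abelian group as $\ker(u)$ of a cyclic $\Z[u]$-module: for a $p$-group $\bigoplus_i\Z/p^{e_i}$ with $e_1\ge\cdots\ge e_k$, the ideal $I=(u^{k},\,p^{s_1}u^{k-1},\dots,p^{s_k})$, where $s_j=e_1+\cdots+e_j$, gives $A=\Z[u]/I$ with $\ker(u)\cong\bigoplus_i\Z/p^{e_i}$, and distinct primes are combined by the Chinese Remainder Theorem into one finite cyclic $\Z[u]$-module. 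This produces a finite $2$-generated metabelian nilpotent $G$ with $Z(G)\cong H$.

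For infinite $H$ the single-variable construction is insufficient: if $A$ is a cyclic $\Z[u]$-module with $u$ nilpotent then $A\otimes\Q$ is a single nilpotent Jordan block, so $\ker(u)$ has free rank at most $1$. To reach free rank $\ge 2$ I would instead put $A=[G,G]$ and make it a cyclic module over $\Z[G^{\mathrm{ab}}]=R:=\Z[x^{\pm1},y^{\pm1}]$ generated by the single commutator $[s,t]$, so that $G=\langle s,t\rangle$ is automatically $2$-generated. By the Magnus embedding the commutator module $F_2'/F_2''$ of the free metabelian group is the free cyclic $R$-module of rank one (the Koszul syzygy module of the regular sequence $x-1,y-1$), so every cyclic module $R/I$ occurs as $[G,G]$ for a suitable $2$-generated metabelian $G$ with $G^{\mathrm{ab}}\cong\Z^2$; imposing $\mathfrak m^{N}\subseteq I$, where $\mathfrak m=(x-1,y-1)$, makes $G$ nilpotent. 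Then $H$ is realised as the socle $(I:\mathfrak m)/I$ of $R/I$: for the free part one takes $I=\mathfrak m^{d}$, whose socle $\mathfrak m^{d-1}/\mathfrak m^{d}$ is free of rank $d$, and torsion summands are inserted by adjoining monomial relations $p^{a}u^{b}w^{c}$ in the two variables $u=x-1,\ w=y-1$, exactly as in the finite case.

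The main obstacle is to force the centre to be \emph{exactly} $H$, and two points must be controlled. First, the socle computation (the ``Key Lemma'') must yield precisely the isomorphism type of $H$; this is where the free-rank-$\le 1$ phenomenon makes the two-variable setup essential and where the explicit choice of $I$ does the real work. Second, I must verify the ``no pollution'' statement $Z(G)\subseteq A$, i.e.\ that no $s^it^j a$ with $(i,j)\ne(0,0)$ is central. In the infinite case this holds because $G^{\mathrm{ab}}\cong\Z^2$ is torsion free and, as $I\cap\Z=0$ keeps $[s,t]$ of infinite order, the commutators $[s^it^j,s]$ and $[s^it^j,t]$ have nonzero leading terms in $R/I$ for $(i,j)\ne(0,0)$; in the finite case it follows from the faithfulness arranged above. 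Granting these, inside $A$ an element is central iff it lies in the socle, so $Z(G)$ equals the socle, which is $\cong H$, and $G$ is finite precisely when $H$ is. I expect the no-pollution verification, together with the general socle computation, to be the most delicate part.
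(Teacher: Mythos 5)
Your overall strategy --- realize $H$ as the annihilator of $\mathfrak m=(x-1,y-1)$ in a cyclic module $A=R/I$ over $R=\Z[x^{\pm1},y^{\pm1}]$ sitting as $[G,G]$ in a quotient of the free metabelian group, with a separate one-variable construction for finite $H$ --- is coherent, and your finite case does work (the ideal $(u^{k},p^{s_1}u^{k-1},\dots,p^{s_k})$ really has $\ker(u)\cong\bigoplus_i\Z/p^{e_i}$, and the no-pollution check for $A\rtimes\langle t\rangle$ goes through). But the step you yourself flag as delicate, the ``Key Lemma'' realizing an arbitrary $\Z^{d}\oplus T$ as the socle of a cyclic $R/I$ with $\mathfrak m^{N}\subseteq I$, is a genuine gap, and the recipe you indicate (``adjoin monomial relations $p^{a}u^{b}w^{c}$ exactly as in the finite case'') is not merely unproven --- it gives the wrong answer. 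The free rank of $\mathrm{soc}(R/I)$ equals $\dim_{\Q}\mathrm{soc}(\Q\otimes R/I)$, and each relation $p^{a}\mu=0$ becomes $\mu=0$ after tensoring with $\Q$, which creates new rational socle elements below $\mu$ and hence unwanted free summands $p^{a}\nu$ in $\mathrm{soc}(R/I)$. Concretely, for $H=\Z\oplus(\Z/p)^{3}$ the natural choice $I=\mathfrak m^{4}+(pu^{3},pu^{2}w,puw^{2})$ yields a socle isomorphic to $\Z^{3}\oplus(\Z/p)^{3}$, because $pu^{2}$ and $puw$ are annihilated by $\mathfrak m$ without being torsion. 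A correct ideal does exist here ($I=\mathfrak m^{4}+(pu^{3},pw^{2})$ works, since $pw^{2}$ forces $puw^{2}$ and $pw^{3}$ to vanish as well while $w^{2}$ itself becomes torsion), but finding and verifying such ideals uniformly for every $\Z^{d}\oplus\bigoplus_i\Z/p_i^{e_i}$ is exactly the content of the proposition, and it is missing from your argument.

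It is worth knowing that the paper avoids this combinatorial problem entirely. It takes $F$ free of rank $2$ in the variety of metabelian nilpotent groups of class $c$, uses that $\gamma_c(F)$ is central and free abelian of rank $c-1$, and writes $H$ merely as a central \emph{section} $L/K$ (a quotient of $\gamma_c(F)$) --- no attempt is made to compute the center of any explicit quotient. Instead, $L$ is chosen maximal among normal subgroups admitting such a presentation of $H$; if the center of $F/K$ were strictly larger than $L/K$, then, since every subgroup of a finitely generated abelian group is isomorphic to a quotient of it, $H$ would also be a central section $M/N$ with $M>L$, contradicting maximality. This maximality trick is what buys exact control of the center for free, and it is the idea your proposal lacks; without it, you are committed to carrying out the full socle computation, which as it stands is both incomplete and, in the form described, incorrect.
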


 \begin{proof} Let $F$ be a free group of free rank $2$ in the variety of metabelian
 and nilpotent of class $\le c$ groups $(c\ge 2).$ The $c$-th member $C$ of
 the lower central series of $F$ is contained in the center of $F$. Furthermore, by Corollary 36.23 of \cite{NeumannHNeumann}, $C$ is a free abelian
 group of rank $c-1$.

 Given an $n$-generated abelian group $H$, it can be presented as a quotient $C/B$
 if $c-1\ge n,$ and so the group $H$ is isomorphic to a central factor $L/K$ of $F$. Since a finitely generated nilpotent group satisfies the ascending chain condition for subgroups~\cite{Robinson}, we can assume that $L$ is a maximal
  normal subgroup of $F$ such that $H\cong L/K$ for some normal subgroups $K$ of $F$ and $L/K$ is
  a central subgroup in $F/K.$

 Now we want to prove that $L/K$ is the center of $G=F/K$. Arguing by contradiction, we have
 a bigger central subgroup $A=M/K$ in $F/K$, i.e. $M>L$. But every subgroup of a finitely generated abelian
 group $A$ is isomorphic to  a factor group of $A$ (trivially follows from~\cite[Theorem 15.6]{Fuchs1}). Hence the subgroup $H\cong L/K\le A$ is isomorphic to a
 central factor $M/N$ of $F$, contrary to the maximality of $L$.

 If $H$ is finite then so is $G$ since the center of an infinite finitely generated, nilpotent
 group is infinite~\cite[Exercise 17.2.10]{KargapolovMerzlyakov}. The theorem is proved.
 %
\end{proof}

\begin{Proposition} Every finitely generated abelian group $H$ is a normal subgroup
with a finite cyclic quotient $G/H$ in a $2$-generated metabelian group $G$.
 \end{Proposition}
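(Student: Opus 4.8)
The plan is to build $G$ as a split extension of $H$ by a finite cyclic group $\langle t\rangle$ of some order $d$, where $t$ acts on $H$ as a suitably chosen automorphism $\phi$ of finite order. If $\phi$ is arranged so that $H$ is generated, as a group, by the single orbit $\{\phi^{j}(v):j\in\Z\}$ of one element $v\in H$ (a cyclic vector), then $G=\langle v,t\rangle$ is $2$-generated; it is metabelian because $H$ is abelian and $G/H\cong\langle t\rangle$ is abelian; and $H$ is normal with finite cyclic quotient $G/H\cong C_d$ by construction. So everything reduces to the claim that every finitely generated abelian group $H$ carries a finite-order automorphism $\phi$ with a cyclic vector. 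I would write $H=T\oplus\Z^{r}$ and fix an invariant-factor basis $e_{1},\dots,e_{s}$ of the finite part $T=\Z_{n_{1}}\oplus\cdots\oplus\Z_{n_{s}}$ (so $n_{1}\mid\cdots\mid n_{s}$) together with a free basis $f_{1},\dots,f_{r}$ of $\Z^{r}$.

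For the torsion part I would take the ``subdiagonal'' automorphism $\phi(e_{1})=e_{1}$ and $\phi(e_{i})=e_{i}+e_{i-1}$ for $2\le i\le s$. The divisibility $n_{i-1}\mid n_{i}$ guarantees that $\phi$ carries the relation lattice $\bigoplus_{i}n_{i}\Z e_{i}$ into itself (and likewise $\phi^{-1}$), so $\phi$ is a genuine automorphism of $T$; being unipotent on a finite group it has finite order. Here $e_{s}$ is a cyclic vector, since $e_{i-1}=\phi(e_{i})-e_{i}$ lets one descend the whole chain, so the orbit of $e_{s}$ generates $T$. This already settles the purely torsion case $r=0$; the purely free case is handled by the cyclic shift below.

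For the free part alone the cyclic shift $f_{j}\mapsto f_{j+1}$ (indices mod $r$) is a finite-order automorphism with cyclic vector $f_{1}$, and the difficulty is to combine it with the torsion construction. Two naive attempts fail: extending the subdiagonal rule to the $f_{j}$ creates a nontrivial unipotent block among the free generators, which has infinite order; and realising $H$ as the direct sum of the two cyclic modules need not be cyclic, essentially because the trivial ($t=1$) constituent occurs in both summands. The trick I would use is to splice the free shift into the top of the torsion chain: set $\phi(f_{j})=f_{j+1}$ for $1\le j\le r-1$ but $\phi(f_{r})=f_{1}+e_{s}$. The matrix of $\phi$ is then block-triangular with a unipotent torsion block and a permutation free block on the diagonal, hence unimodular, so $\phi$ is an automorphism preserving the relation lattice; and since $\phi$ is of finite order both on $T$ and on $H/T\cong\Z^{r}$, it is of finite order on $H$. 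Finally $v=f_{1}$ is a cyclic vector: one computes $\phi^{r}(f_{1})=f_{1}+e_{s}$, so $e_{s}=\phi^{r}(v)-v$ lies in the orbit-generated subgroup $M$; descending the torsion chain as above puts all of $T$ into $M$, while the images of $f_{1}$ in $H/T$ run through the shift orbit and hence generate $\Z^{r}$; therefore $M=H$.

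The main obstacle is precisely this last combination step: one must keep $\phi$ of finite order — which forbids any unipotent coupling among the free generators — while still forcing a single orbit to sweep out both the free and the torsion directions. Coupling the wrap-around $f_{r}\mapsto f_{1}+e_{s}$ of the free shift into the torsion chain is what reconciles these two requirements, since it injects the torsion generator $e_{s}$ into the orbit of $f_{1}$ without disturbing the finite-order shift on the free quotient. With such a $\phi$ in hand, the split extension $G$ described in the first paragraph completes the proof.
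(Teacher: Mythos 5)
Your construction is correct and is essentially the paper's own proof: the authors likewise use a single finite-order automorphism that cyclically shifts the free generators, wraps the last one around to the first generator times the largest-order torsion generator, and runs a unipotent chain along the torsion summands (well defined by the divisibility of the invariant factors), so that the orbit of one generator together with the acting element gives a $2$-generated semidirect product. The only differences are cosmetic: the paper orders the invariant factors in the opposite direction and verifies that the map is a finite-order automorphism via surjectivity, the Hopf property, and an explicit power $\varphi^{msl}=\mathrm{id}_H$, rather than via your block-triangular/unimodularity argument.
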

 \begin{proof}
 The group $H$ is the direct sum of cyclic subgroup
$$
\langle z_1 \rangle \oplus\dots\oplus \langle z_m \rangle \oplus \langle z_{m+1} \rangle \oplus
\dots\oplus \langle z_{m+n} \rangle \quad (m,n\ge 0),
$$
where
 $z_1,\dots, z_m$ have infinite orders, and the order $n_{i+1}$ of $z_{i+1}$ divides the finite order
 $n_i$ of $z_i$ for $i=m+1,\dots, m+n-1.$

Then mapping $\varphi$ on the generators of $H$ given by the rules
\begin{align}
\label{df}
  &\begin{aligned}
   & z_1\mapsto z_2,\; z_2\mapsto z_3, \dots, z_{m-1}\mapsto z_m, \\
   &  z_m\mapsto z_1 z_{m+1}, z_{m+1} \mapsto z_{m+1} z_{m+2}, \dots, z_{m+n-1} \mapsto z_{m+n-1} z_{m+n}, \\
   & z_{m+n} \mapsto z_{m+n}
  \end{aligned}
\end{align}
extends to the endomorphism of $H$ since it is well defined on the direct summand
due to the conditions $n_{i+1}|n_i$. This endomorphism (denoted also by $\varphi$) is surjective
since it is surjective on the $\varphi$-invariant subgroup $L=\langle z_{m+1},\dots, z_{m+n} \rangle$ and
on the quotient $H/L.$ By the Hopf property of $H$, $\varphi$ is its automorphism.

Furthermore, $\varphi^m$ induces the identity automorphism of $H/L$, and $\varphi^{ms}$
is identical on $L$ for some $s>0$ since the subgroup $L$ is finite. It follows that
$\varphi^{msl} = id_H$, where $l$ is the order of $L$.

Let now $G$ be the semidirect product of $H$ and the finite cyclic group $\langle \varphi \rangle$
with the defined above action of $\varphi$. Applying the conjugations by the powers of
$\varphi$  to $z_1$, we successively obtain from (\ref{df}) that $z_2,\dots, z_m,z_{m+1},\dots,z_{m+n}\in
\langle \varphi, z_1 \rangle $, that is, $G$ is a 2-generated group, as desired. \end{proof}


\section{Further discussion}
\label{Further discussion}

The fact that countable abelian, nilpotent, or generalized nilpotent groups are not in general embeddable into finitely generated groups from the
mentioned classes, is a background of the embedding theorems for countable solvable groups: solvability, in some sense, is the ``first property'' that can be added to embeddings of countable groups into finitely-generated groups.

One of the main motives in theory of embeddings of groups is: Given an embedding of the group $H$ into the group $G$, then ``how close'' is the group $G$ to $H$? By the well-known result of B.H.~Neumann and Hanna Neumann~\cite{NeumannHNeumann}, every countable solvable group of solvability length $l$ is embeddable into a $2$-generated solvable group of length $l+2$  but not, in general, of $l+1$. To show that $l+2$ may not be replaced by  $l+1,$ Ph. Hall \cite{Hall on Finiteness Conditions for soluble groups} (Lemma 2) and B.H.~Neumann and Hanna Neumann ~\cite{NeumannHNeumann} (Lemma 5.3) bring explicit examples for $l=1$, namely the additive group of rational numbers $\Q$ and the quasicyclic groups $C(p^\infty)$, resp.
That lemmas can  be generalized in one more direction: counter-examples of the above mentioned type can be found for any $l\ge 1$.

\begin{Example}
Denote by $H$ the  group of all upper unitriangular $n\times n$ matrices over $\Q$.
 It is easy to check that $H$ is a (uniquely) divisible group, and
it is solvable of prescribed length $l\ge 1$ for $n=2^{l-1}+1$.
The group $H$ does not embed
into a finitely generated group $G$ of solvable length $l+1$. Indeed, the finitely generated
abelian group $G/G'$ has no non-trivial divisible subgroups, i.e., $HG'/G'$ is trivial, and so $H\le G'$.
Note that no non-trivial subgroup of $G'/G''$  is divisible because $G'/G''$ is  a Hall group  by \cite{Hall on Finiteness Conditions for soluble groups}. Hence $HG''/G''$ is trivial, i.e., $H\le G''$, and we come
to a contradiction since the length of $G''$ is at most $l-1.$
\end{Example}

\begin{Problem}  For any $l\ge 2$ obtain  an embedding criterion (similar to that given in Theorem \ref{main} for $l=1$): Which countable groups of solvable length $l$ are embeddable in finitely
generated solvable groups of length $l+1$ ?
\end{Problem}

If in case $l=1$, one replaces metabelian groups by a slightly larger class of center-by-metabelian
groups, then all the restrictions on $H$ will be removed, since Ph. Hall~\cite[Theorem 6]{Hall on Finiteness Conditions for soluble groups} showed that every countable abelian  group $H$ is the center of a $2$-generated center-by-metabelian group. On the other
hand, the examples of Section~\ref{Embeddings of finitely generated abelian groups} demonstrate that finitely generated abelian groups admit embeddings
in $2$-generated metabelian groups with nice additional properties. What can we expect if $l\ge 2$?
The sharper formulation:

\begin{Problem}  Does every finitely generated solvable group of length $l\ge 2$ embed
into a $2$-generated solvable group of length $l+1$ ? Or at least, into some
$k$-generated $(l+1)$-solvable group, where $k=k(l)$ ?
\end{Problem}

{\bf Acknowledgments.} We would like to thank Avinoam Mann and Vitaly Roman'kov for their comments. 
The first author is happy to acknowledge very warm hospitality he enjoyed during his research visit to the Vanderbilt University. The second author was supported in part by the NSF grants DMS-1161294 and by the RFBR grant 11-01-00945.



\vskip11mm
{\small
\begin{tabular}{p{2.3in}p{3in}}
Vahagn H. Mikaelian:\newline
Department of Applied Mathematics\newline
Yerevan State University\newline
Yerevan 0025, Armenia.\newline
E-mail: v.mikaelian@gmail.com
&
Alexander Yu. Olshanskii:\newline
Department of Mathematics\newline
Vanderbilt University\newline
Nashville, TN 37240, USA.\newline
E-mail: alexander.olshanskiy@vanderbilt.edu\\
\end{tabular}
}

\end{document}